\theoremstyle{plain}
\newtheorem{thm}{Theorem}[section]
\newtheorem{lem}[thm]{Lemma}
\theoremstyle{definition}
\newtheorem{defn}[thm]{Definition}
\newtheorem{rem}[thm]{Remark}
\newtheorem{exa}[thm]{Example}
\def\ZZ{\mathbb Z}
\def\QQ{\mathbb Q}
\def\RR{\mathbb R}
\def\NN{\mathbb N}
\def \trd {\root 3 \of {2}}
\def \trs {\root 3 \of {4}}
\def \abc {a + b\trd + c\trs}
\def\vec#1{\mathbf{#1}}
\title{Different bases in  investigation of $\trd$}
\author{Mitja Lakner, Peter Petek,  Marjeta \v Skapin Rugelj}
\thanks{ M. Lakner,  M. \v{S}kapin Rugelj: University of Ljubljana, Faculty of Civil and Geodetic Engineering,
Jamova 2, 1000 Ljubljana, Slovenia.\\
P. Petek: University of Ljubljana, Faculty of Education,
Kardeljeva plo\v{s}\v{c}ad 16, 1000 Ljubljana, Slovenia.\\ \textit{ E-mails:}
mlakner@fgg.uni-lj.si, Peter.Petek@guest.arnes.si,
mskapin@fgg.uni-lj.si.}
\date{\today}
\begin{document}
\parindent0cm
\parskip2mm

\begin{abstract}
The present paper is in a sense a continuation of \cite{PLS},
it relies on the notation and some
results. The problem tackled in both papers is the nature of the
continued fraction expansion of $\trd$: are the partial quotients bounded
or not. Numerical experiments suggest an even stronger result on
the lines of Kuzmin statistics. Here we apply different sets of bases for the vector space $V$,
where the adjunction ring $\ZZ\lbrack\trd\rbrack$ lives. And as a result
we get a criterion for continued fraction convergents in terms of the
coefficient vector from a lattice.
\medskip

\noindent {\it Key words:} bases, cubic root, continued fractions.\\
\noindent {\it Mathematics Subject Classification (2000):} 11A55, 11K50, 11R16.

\end{abstract}

\maketitle

\section{Introduction}

Stability of invariant circles in K.A.M. theory depends on the respective
rotation number. The most stable circle has $\phi={-1+\sqrt{5}\over 2}$,
the golden mean ratio, as its rotation number, all partial quotients equal
$b_i=1$.  As in \cite{SL} boundness of partial quotients would be a bonus
in representation on a computer. Here however, experiments strongly suggest the opposite.
Also cubic irrationals are interesting in studying
quasiperiodic motion \cite{CSG}, \cite{MH}. Here we investigate $\trd$ and its
adjunction ring. It is a common belief that the partial quotients of
$\trd$ are not bounded, supported by extensive computations, but no proof.

Even more, computations suggest that their relative frequencies in the limit obey
the Kuzmin law $P(b_n=k)=\log_2{(k+1)^2\over k(k+2)}$. In \cite{RDM} several
algebraic numbers were used in computations, among them $\root 3 \of {2}, \root 4 \of{2},
\root 5 \of {2}$ and good accordance was found with Kuzmin's statistics, for
$\trd$ even too good. So later in \cite{LT} and \cite{Brj} larger samples were taken
and the anomaly seemed to disappear. We played the same game, only we had the advantage of
more sophistical computation tools that evolved in the years in between. The
experimental results supporting of the stronger hypothesis instigated us to try towards
some theoretical results.

 However questions about "big" partial quotients
still linger and tease us. Though some explanation was given in a special case,
using elliptic modular functions \cite{S}, \cite{CM}.

In this paper we work towards the proof of

\noindent{\bf Hypothesis.} The partial quotients of $\trd$ are not bounded.\\[2mm]
denoted as Hypothesis B in \cite{PLS}. Only a very limited partial result is given,
helping to recognize possible convergents and estimating the next partial quotient.

\section{Adjunction ring and ambient vector space $V$}

For convenience of the reader we repeat some definitions and notations
from \cite{PLS}.

In the adjunction ring we have the unit $\rho=1+\trd+\trs$ and its inverse
$\sigma =-1+\trd$, $\rho\sigma=1$. We span the 3-dim space $V$ on $\rho, 1, \sigma$. Obviously
the continued fraction expansions for $\trd$ and $\sigma$ differ only in the
starting partial quotient, so we rather consider approximations to $\sigma$.
And in order to find ever better approximations, we construct series of
vector space bases for $V$.

Multiplicative norm is defined in $\ZZ\lbrack\trd\rbrack$. Let
$x=\abc$, its norm is
\begin{equation}\label{norma}
N(x)=a^3+2b^3+4c^3-6abc =x\cdot x'\cdot x''
\end{equation}
 with
$$x'=a+\omega b\trd+\omega^2 c\trs,$$
$$x''=a+\omega^2 b\trd+\omega c\trs$$
and $\omega=e^{2\pi i\over 3}$ a third root of one.

Division in general leads to the corresponding field $\QQ\lbrack\trd\rbrack$.
Carrying out the rationalization of the denominator as in
$${1\over x}={x'\cdot x''\over N(x)}={a^2-2bc+(2c^2-ab)\trd+(b^2-ac)\trs\over N(x)}$$
gives the elements of $\QQ\lbrack\trd\rbrack$ in the form $\abc$ with
$a,b,c$ rational fractions.

\section{The ambient vector space $V$}

As already mentioned, instead of $(1,\trd,\trs)$ we use the algebraic basis
$(\rho,1,\sigma)$ and elements of $\ZZ\lbrack\trd\rbrack$ are expressed as
$$w=x\cdot\rho+y\cdot 1+z\cdot\sigma$$
or changing the basis
$$\abc=c\cdot\rho+(a-2c+b)\cdot 1+(b-c)\cdot\sigma.$$

Now, let $V=\RR^3$ be the 3-dimensional space endowed with the
usual scalar product $\langle\vec{a},\vec{b}\rangle$ and cross
product $\vec{a}\times\vec{b}$. Vectors can be written as triplets
$$V=\{\vec{v}=(x,y,z); x,y,z\in \RR\}$$
and define a linear mapping
$$\eta:\ZZ\lbrack\trd\rbrack\to V$$
by $\eta(x\cdot\rho+y\cdot 1+z\cdot\sigma)=(x,y,z)$, the resulting image
consisting of all vectors with integer entries, multiplication inherited from
$\ZZ\lbrack\trd\rbrack$.

Taking into account
$$
x\cdot\rho+y\cdot 1+z\cdot\sigma = (x+y-z) + (x+z)\trd + x\trs
$$
we can define the norm function in the whole $V$:
\begin{equation*}\label{delta}
\widetilde{N}(x,y,z)=(x+y-z)^3+2(x+z)^3+4x^3-6(x+y-z)(x+z)x
\end{equation*}

Multiplication with $\sigma$ will prove very important and we observe
$$\eta(\sigma\cdot w)=S\eta(w)$$
where
$$S=\left[
\begin{matrix}
0&0&1\cr
1&0&-3\cr
0&1&-3
\end{matrix}
\right].$$
which, by the way, represents a hyperbolic toral authomorphism  \cite{BD}.

\section{Interplay of different bases in $V$}

The term basis comes in several ways in mathematics. Our
discussion needs it in two appearances:
\begin{itemize}
\item as the {\it basis} of a number system ($\rho$ in our case),
\item as the {\it basis} of a vector space (different bases of $V$
here).
\end{itemize}

The first usage figures in our paper \cite{PLS}, here we are concerned
with the second one.

We can think of the basis ${\mathcal B}_0=((1,0,0),(0,1,0),(0,0,1))$ as the
canonical one. And we shall also write
${\mathcal B}_0=(\vec{s}_{-1},\vec{s}_0,\vec{s}_1)$
as we shall denote $\eta(\sigma^j)=\vec{s}_j$, and noting
$S\vec{s}_j=\vec{s}_{j+1}$ we have

\quad $\vec{s}_0=(0,1,0)$,

\quad $\vec{s}_1=(0,0,1)$,

\quad $\vec{s}_2=(1,-3,-3)$,

\quad $\vec{s}_3=(-3,10,6)$,

\quad $\vec{s}_4=(6,-21,-8)$,

\quad $\vec{s}_5=(-8,30,3)$,

\quad $\vec{s}_6=(3,-17,21)$,

and we may also need the ones with negative indices

\quad $\vec{s}_{-1}=(1,0,0)$,

\quad $\vec{s}_{-2}=(3,3,1)$,

\quad $\vec{s}_{-3}=(12,10,3)$,

\quad $\vec{s}_{-4}=(46,39,12)$,

\quad $\vec{s}_{-5}=(177,150,46)$,

\quad $\vec{s}_{-6}=(681,577,177)$,

as well as the inverse matrix
$$
S^{-1}= \left[
\begin{matrix}3&1&0\cr
              3&0&1\cr
              1&0&0
\end{matrix}
\right],$$ which we also meet in the Jacobi-Perron algorithm
\cite{LB}.

Further we define the series of bases
${\mathcal B}_j=(\vec{s}_{j-1},\vec{s}_j,\vec{s}_{j+1})$ for all integer $j$.
These are good bases for our purposes as

\begin{lem}
Elements of $\ZZ^3$ have integer coefficients
in each basis ${\mathcal B}_j$.
\end{lem}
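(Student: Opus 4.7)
The plan is to reduce the statement to the fact that $S$ lies in $SL_3(\ZZ)$, and exploit that $\mathcal{B}_0$ happens to be the canonical basis.

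First I would form the change-of-basis matrix $M_j$ whose columns are the vectors of $\mathcal{B}_j$, namely $\vec{s}_{j-1},\vec{s}_j,\vec{s}_{j+1}$. Since $\vec{s}_{-1}=(1,0,0)$, $\vec{s}_0=(0,1,0)$, $\vec{s}_1=(0,0,1)$, the matrix $M_0$ is the identity. Applying the recursion $\vec{s}_{k+1}=S\vec{s}_k$ column-wise then gives $M_j = S^j M_0 = S^j$ for every integer $j$, positive or negative. Thus the coordinates of any $\vec{v}\in\RR^3$ in basis $\mathcal{B}_j$ are precisely $M_j^{-1}\vec{v}=S^{-j}\vec{v}$.

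Next I would verify that $\det S = 1$ by expanding along the first row of $S$; the only nonzero cofactor comes from the $(1,3)$-entry, whose $2\times 2$ minor is $\det\begin{pmatrix}1&0\\0&1\end{pmatrix}=1$. Hence $S\in SL_3(\ZZ)$, and consequently $S^{-1}\in SL_3(\ZZ)$ as well (this is already visible in the explicit $S^{-1}$ displayed earlier in the paper). By induction both $S^j$ and $S^{-j}$ have integer entries for every $j\in\ZZ$.

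Putting these two observations together: if $\vec{v}\in\ZZ^3$, then its coordinate vector in $\mathcal{B}_j$ is $S^{-j}\vec{v}$, which is again in $\ZZ^3$. This is exactly the assertion of the lemma. There is no real obstacle here; the whole content of the statement is the unimodularity of $S$, the rest is a linear-algebra bookkeeping step that chains the unit of $\ZZ[\trd]$ to the bases of the ambient lattice.
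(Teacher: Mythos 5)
Your proof is correct and is essentially the paper's own argument: the paper likewise multiplies the expansion $v=\alpha\vec{s}_{j-1}+\beta\vec{s}_j+\gamma\vec{s}_{j+1}$ by the integer matrix $S^{-j}$ to read off $(\alpha,\beta,\gamma)=S^{-j}v\in\ZZ^3$. The only difference is that you explicitly verify $\det S=1$, which the paper leaves implicit by displaying the integer matrix $S^{-1}$.
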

\begin{proof} Let $v$ be an element from $\ZZ^3$.
If we multiply expansion
$$v=\alpha\vec{s}_{j-1}+\beta\vec{s}_j+\gamma\vec{s}_{j+1}$$
by integer element matrix $S^{-j}$, we get vector with integer components
$$S^{-j}v=\alpha\vec{s}_{-1}+\beta\vec{s}_0+\gamma\vec{s}_1=(\alpha,\beta,\gamma).$$
\end{proof}

Besides the series of bases ${\mathcal B}_j$ we also define the
conjugate series ${\mathcal B}^*_j$ in the following manner.

For start $\vec{s}^*_0=(1,0,0)$ and with the adjoint matrix
$$
S^*=\left[
\begin{matrix}0&1&0\cr
              0&0&1\cr
              1&-3&-3
\end{matrix}
\right] $$

we define vectors $\vec{s}^*_{j+1}=S^*\vec{s}^*_j$ for positive
and negative indices. So we have

\quad $\vec{s}^*_{-3}=(46, 12, 3)$,

\quad $\vec{s}^*_{-2}=(12,3,1)$,

\quad $\vec{s}^*_{-1}=(3,1,0)$,

\quad $\vec{s}^*_{0}=(1,0,0)$,

\quad $\vec{s}^*_{1}=(0,0,1)$,

\quad $\vec{s}^*_{2}=(0,1,-3)$,

\quad $\vec{s}^*_{3}=(1,-3,6)$.

\begin{rem}
Comparing with the vectors $\vec{s}_j=(x_j, y_j, z_j)$ we see that
$$\vec{s}^*_j=(x_{j-1},x_j,x_{j+1})=(z_{j-2},z_{j-1},z_j).$$
\end{rem}

The bases series ${\mathcal B}^*_j=(\vec{s}^*_{j-1},\vec{s}^*_j,\vec{s}^*_{j+1})$
again providing integer coefficients for $\ZZ\lbrack\trd\rbrack$. The two
bases series shall be useful in further computations.

The action of linear transformation $S$ is best understood in
terms of its eigenvalues and eigenspaces. One eigenvalue is real,
smaller then $1$, and two are complex conjugate greater than $1$

\quad $\lambda_1=\sigma=-1+\trd$,

\quad $\lambda_2=\sigma'=-1+\omega\trd=-1-{\trd\over 2}+{i\over 2}\sqrt{3}\trd$,

\quad $\lambda_3=\sigma''=-1+\omega^2\trd=-1-{\trd\over 2}-{i\over 2}\sqrt{3}\trd$.

The eigenvectors being $\vec{h}$ and $\vec{g}\pm i\vec{k}$ where

\quad $\vec{h}={1\over 6}(\trd,2-2\trd+\trs,-\trd+\trs)\doteq(0.209987,0.177926,0.05458)$,

\quad $\vec{g}={1\over 12}(-\trd,4+2\trd-\trs,\trd-\trs)\doteq(-0.104993,0.411037,-0.02729)$,

\quad $\vec{k}={\sqrt{3}\over
12}(\trd,-2\trd-\trs,-\trd-\trs)\doteq(0.181854,-0.592829,-0.410976)$.

We can also compute the rotation angle
$$\theta=\pi-\arctan {\sqrt3\root 3 \of {2}\over 2+\root 3 \of {2}}\doteq146.2^0$$

and after some computation we can express
$$\vec{s}_j=\sigma^j\vec{h}+2\rho^{j\over 2}(\vec{g}\cos(j\theta)-\vec{k}\sin(j\theta)),$$
and
$$S\vec{g}=\sqrt{\rho}(\vec{g}\cos\theta-\vec{k}\sin\theta),$$
$$S\vec{k}=\sqrt{\rho}(\vec{g}\sin\theta+\vec{k}\cos\theta).$$

The norm (\ref{norma}) takes zero value on the union of the
eigenplane $P$ spanned by
 vectors $\vec{g}$, $\vec{k}$ and the eigenline of $\vec{h}$. Except for the origin,
  there is no rational point $(x,y,z)$ of zero norm.

The basic vectors $\vec{s}_j$ with increasing positive $j$ are approaching
the invariant plane  and for negative $j$ being almost colinear
to the eigenvector $\vec{h}$.

On the other hand we can construct the eigenbasis
${\mathcal B}_e=(\vec{h},\vec{g},\vec{k})$ and the conjugate eigenbasis.

To make the conjugate eigenbasis, we compute the vector products
\begin{eqnarray*}
\vec{h}^*&=&\vec{g}\times\vec{k},\\
\vec{g}^*&=&\vec{h}\times\vec{k},\\
\vec{k}^*&=&\vec{h}\times\vec{g}
\end{eqnarray*} and they
constitute the conjugate eigenbasis ${\mathcal B}^*_e$

\quad $\vec{h}^*={-\sqrt{3}\over 36}(1+\trd+\trs,1,-1+\trd)\doteq(-0.185104,-0.0481125,-0.0125055)$,

\quad $\vec{g}^*={-\sqrt{3}\over 36}(-2+\trd+\trs,-2,2+\trd)\doteq(-0.0407668,0.096225,-0.156843)$,

\quad $\vec{k}^*={1\over 12}(\trd-\trs,0,\trd)\doteq(-0.02729,0,0.104993)$.

Later we shall need also the mixed product
$$[\vec{h},\vec{g},\vec{k}]={-\sqrt{3}\over
36}=-M\doteq-0.0481125.$$
 The plane $P^*$ of the vectors
$\vec{g}^*, \vec{k}^*$ is the invariant plane of $S^*$.

\begin{lem}The vectors $\vec{h}^*, \vec{g}^*\pm i\vec{k}^*$ are eigenvectors
of the matrix $S^*$.
\end{lem}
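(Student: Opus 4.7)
My plan rests on two observations. First, a direct inspection shows $S^* = S^T$, so $S^*$ and $S$ share the same characteristic polynomial and hence the same eigenvalues $\sigma,\sigma',\sigma''$. Second, I would invoke a standard duality principle: if $\vec u_1,\vec u_2,\vec u_3$ is a basis of eigenvectors of $S$ with eigenvalues $\lambda_j$, then the dual basis $\vec u_i^{\,\vee}$, characterized by $\langle\vec u_i^{\,\vee},\vec u_j\rangle=\delta_{ij}$, consists of eigenvectors of $S^T$ with the \emph{same} eigenvalues. This follows at once from
$$\langle S^T\vec u_i^{\,\vee},\vec u_j\rangle=\langle\vec u_i^{\,\vee},S\vec u_j\rangle=\lambda_j\,\delta_{ij}.$$

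The proof thus reduces to identifying, up to nonzero scalars, the three candidates $\vec h^*,\,\vec g^*+i\vec k^*,\,\vec g^*-i\vec k^*$ with the dual basis of the eigenbasis $(\vec h,\,\vec g+i\vec k,\,\vec g-i\vec k)$. Since the dual basis is given by
$$\frac{\vec u_2\times\vec u_3}{[\vec u_1,\vec u_2,\vec u_3]},\qquad\frac{\vec u_3\times\vec u_1}{[\vec u_1,\vec u_2,\vec u_3]},\qquad\frac{\vec u_1\times\vec u_2}{[\vec u_1,\vec u_2,\vec u_3]},$$
I would expand the numerators bilinearly, using the definitions $\vec h^*=\vec g\times\vec k$, $\vec g^*=\vec h\times\vec k$, $\vec k^*=\vec h\times\vec g$, to obtain
$$(\vec g+i\vec k)\times(\vec g-i\vec k)=-2i\,\vec h^*,\quad(\vec g-i\vec k)\times\vec h=i(\vec g^*+i\vec k^*),\quad\vec h\times(\vec g+i\vec k)=i(\vec g^*-i\vec k^*),$$
while the common denominator is $[\vec h,\,\vec g+i\vec k,\,\vec g-i\vec k]=-2i[\vec h,\vec g,\vec k]=2iM\neq 0$. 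Each candidate is therefore a nonzero scalar multiple of a dual-basis vector, hence an eigenvector of $S^*$: $\vec h^*$ with eigenvalue $\sigma$, and $\vec g^*\pm i\vec k^*$ with the complex eigenvalues attached to $\vec g\pm i\vec k$, namely $\sigma'$ and $\sigma''$.

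No genuine obstacle arises; the duality framework forces the conclusion as soon as $S^*=S^T$ is noticed. The only step that requires care is the sign and factor-of-$i$ bookkeeping in the three complexified cross products displayed above, so as to ensure the correct pairing of $\vec g^*\pm i\vec k^*$ with $\sigma'$ and $\sigma''$; everything else is structural.
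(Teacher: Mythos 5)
Your proof is correct, and it is both cleaner and more complete than the one in the paper. The paper's argument shares your starting point --- it implicitly uses $S^*=S^T$ via $\langle S^*\vec{h}^*,\vec{g}\rangle=\langle\vec{h}^*,S\vec{g}\rangle$ --- but it only treats $\vec{h}^*$: it shows $S^*\vec{h}^*$ is orthogonal to both $\vec{g}$ and $\vec{k}$ (using the explicit rotation formulas for $S\vec{g}$, $S\vec{k}$), hence collinear with $\vec{g}\times\vec{k}=\vec{h}^*$, and identifies the eigenvalue as the unique real one, $\sigma$; for $\vec{g}^*\pm i\vec{k}^*$ the authors explicitly punt, remarking that those cases ``demand a little more work.'' Your reciprocal-basis argument supplies exactly that missing work: the general fact that the dual basis of an eigenbasis of $S$ is an eigenbasis of $S^T$ with matching eigenvalues, combined with the cross-product formula for the dual basis, handles all three vectors uniformly, and the complexified cross products $(\vec g\pm i\vec k)$ versus $\vec h$ correctly produce $-2i\vec h^*$ and $i(\vec g^*\pm i\vec k^*)$ over the nonzero determinant $-2i[\vec h,\vec g,\vec k]=2iM$. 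As a bonus you also pin down which of $\sigma',\sigma''$ goes with $\vec g^*+i\vec k^*$ versus $\vec g^*-i\vec k^*$, which the lemma does not demand but the paper would need elsewhere. One small caution: make sure all inner products, cross products and the triple product are understood as the bilinear (not Hermitian) extensions to $\mathbb{C}^3$, since the adjointness identity $\langle S^T u,v\rangle=\langle u,Sv\rangle$ and the identity $\langle u\times v,w\rangle=[u,v,w]$ you rely on are polynomial identities that extend bilinearly; with that convention everything you wrote goes through.
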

\begin{proof} First we find the scalar products:
$$\langle S^*\vec{h}^*,\vec{g}\rangle=\langle\vec{g}\times\vec{k},S\vec{g}\rangle=
\langle\vec{g}\times\vec{k},\sqrt{\rho}(\vec{g}\cos\theta-\vec{k}\sin\theta)\rangle=0,$$
$$\langle S^*\vec{h}^*,\vec{k}\rangle=\langle\vec{g}\times\vec{k},S\vec{k}\rangle=
\langle\vec{g}\times\vec{k},\sqrt{\rho}(\vec{g}\sin\theta+\vec{k}\cos\theta)\rangle=0.$$

So it is clear $S^*\vec{h}^*$ is orthogonal to both $\vec{g}$ and $\vec{k}$ and
therefore colinear to $\vec{h}^*$ itself, therefore an eigenvector with a real eigenvalue,
the only one being $\sigma$, therefore $S^*\vec{h}^*=\sigma\vec{h}^*$. The other two
cases demand a little more work to tell apart the two complex eigenvalues, of course
unless we want to go into direct computation.
\end{proof}

We can also express the vectors $\vec{s}^*_j$ in terms of the conjugate
eigenbasis
$$\vec{s}^*_j=-2\sqrt{3}\trd\left(\sigma^j\vec{h}^*+\rho^{j\over 2}(\vec{g}^*\cos(j\theta-{\pi\over 3})
-\vec{k}^*\sin(j\theta-{\pi\over 3}))\right)$$ and infer a
connection between the basis and conjugate basis via the matrix
$$T=\left[
\begin{matrix}3&1&0\cr
              1&0&0\cr
              0&0&1
\end{matrix}
\right],$$

namely
$$\vec{s}^*_j=T\vec{s}_j$$

and also connecting the eigenbasis with conjugate eigenbasis

\quad $T\vec{h}=-2\sqrt{3}\trd\vec{h}^*$,

\quad $T\vec{g}={\sqrt{3}\trd\over 2}(-\vec{g}^*-\sqrt{3}\vec{k}^*)$,

\quad $T\vec{k}={\sqrt{3}\trd\over 2}(\sqrt{3}\vec{g}^*-\vec{k}^*)$.

We shall also need some scalar and cross products of the basis vectors.

\begin{lem} The scalar products of basis vectors are as follows:
$$\langle\vec{s}^*_n,\vec{s}_k\rangle=\langle\vec{s}^*_0,\vec{s}_{k+n}\rangle$$
with

\quad $\langle\vec{s}^*_0,\vec{s}_{-2}\rangle=3$,

\quad $\langle\vec{s}^*_0,\vec{s}_{-1}\rangle=1$,

\quad $\langle\vec{s}^*_0,\vec{s}_{0}\rangle=0$,

\quad $\langle\vec{s}^*_0,\vec{s}_{1}\rangle=0$,

\quad $\langle\vec{s}^*_0,\vec{s}_{2}\rangle=1$,

\quad $\langle\vec{s}^*_0,\vec{s}_{3}\rangle=-3$.
\end{lem}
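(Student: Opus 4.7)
The plan is to reduce both identities to a single structural fact: the matrix $S^*$ defined in the paper is nothing but the transpose of $S$. A quick glance at the two matrices
$$S=\begin{pmatrix}0&0&1\\1&0&-3\\0&1&-3\end{pmatrix},\qquad S^*=\begin{pmatrix}0&1&0\\0&0&1\\1&-3&-3\end{pmatrix}$$
confirms $S^*=S^T$, and this is the only nontrivial observation needed.

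Once this is in hand, the equality $\langle\vec{s}^*_n,\vec{s}_k\rangle=\langle\vec{s}^*_0,\vec{s}_{n+k}\rangle$ is a one-line calculation. I would write $\vec{s}_k=S^k\vec{s}_0$ (from $\vec{s}_{j+1}=S\vec{s}_j$) and $\vec{s}^*_n=(S^*)^n\vec{s}^*_0=(S^T)^n\vec{s}^*_0$, and then slide $(S^T)^n$ across the scalar product:
$$\langle\vec{s}^*_n,\vec{s}_k\rangle=\langle (S^T)^n\vec{s}^*_0,S^k\vec{s}_0\rangle=\langle\vec{s}^*_0,S^nS^k\vec{s}_0\rangle=\langle\vec{s}^*_0,\vec{s}_{n+k}\rangle.$$

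For the six explicit numbers, I would simply note that $\vec{s}^*_0=(1,0,0)$, so $\langle\vec{s}^*_0,\vec{s}_j\rangle$ extracts the first coordinate $x_j$ of $\vec{s}_j$. Reading the first entries off the tables of $\vec{s}_j$ displayed earlier (e.g.\ $\vec{s}_{-2}=(3,3,1)$, $\vec{s}_2=(1,-3,-3)$, $\vec{s}_3=(-3,10,6)$, etc.) immediately gives the values $3,1,0,0,1,-3$.

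There is essentially no obstacle here; the only thing that needs attention is the negative-index case, since then one must invoke that $S$ is invertible (with $S^{-1}$ also having integer entries, as recorded explicitly in the text) so that the identity $\vec{s}_k=S^k\vec{s}_0$ and its conjugate counterpart make sense for all integers $k,n$, and the manipulation above remains valid.
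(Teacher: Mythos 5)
Your proposal is correct and follows essentially the same route as the paper: both exploit that $S^*$ is the transpose (adjoint) of $S$ to move $(S^*)^n$ across the scalar product, giving $\langle\vec{s}^*_n,\vec{s}_k\rangle=\langle\vec{s}^*_0,S^n\vec{s}_k\rangle=\langle\vec{s}^*_0,\vec{s}_{k+n}\rangle$, and then read off the first component of $\vec{s}_j$ since $\vec{s}^*_0=(1,0,0)$. Your extra remark about invertibility of $S$ for negative indices is a sound (if implicit in the paper) clarification, not a difference in method.
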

\begin{proof} Since
$$\langle\vec{s}^*_n,\vec{s}_k\rangle=\langle {S^*}^n\vec{s}^*_0,\vec{s}_k\rangle=
\langle\vec{s}^*_0,S^n\vec{s}_k\rangle=\langle\vec{s}^*_0,\vec{s}_{k+n}\rangle$$
we only need to read off the first component of $\vec{s}_j$ as
$\vec{s}^*_0=(1,0,0)$.
\end{proof}

\begin{lem} For two consecutive basis vectors we have the cross product
$$\vec{s}_{-j}\times\vec{s}_{-j+1}=\vec{s}^*_j$$
and if we jump by one index
$$\vec{s}_{-j-1}\times\vec{s}_{-j+1}=-\vec{s}^*_{j-1}+3\vec{s}^*_j.$$
\end{lem}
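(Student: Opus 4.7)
The plan is to exploit two structural facts that the setup already makes available. First, by inspection $S^*=S^T$ (compare the two matrices), and a quick cofactor expansion gives $\det S=1$. Second, the transformation law of the cross product under a linear map $M$ acting on both factors is
\[
(M\vec u)\times(M\vec v)=(\det M)\,M^{-T}(\vec u\times\vec v).
\]
Applied to $M=S^{-j}$ (so $\det M=1$ and $M^{-T}=(S^T)^j=(S^*)^j$), this will reduce both identities to one-time base computations that can then be propagated by $(S^*)^j$.

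For the first identity I would write $\vec s_{-j}=S^{-j}\vec s_0$ and $\vec s_{-j+1}=S^{-j}\vec s_1$. The formula above then yields
\[
\vec s_{-j}\times\vec s_{-j+1}=(S^*)^j(\vec s_0\times\vec s_1).
\]
A direct computation gives the base case $\vec s_0\times\vec s_1=(0,1,0)\times(0,0,1)=(1,0,0)=\vec s_0^{\,*}$, and since $(S^*)^j\vec s_0^{\,*}=\vec s_j^{\,*}$ by definition of the conjugate series, the first claim follows immediately.

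For the second identity I would proceed analogously with $\vec s_{-j-1}=S^{-j}\vec s_{-1}$ and $\vec s_{-j+1}=S^{-j}\vec s_1$, giving
\[
\vec s_{-j-1}\times\vec s_{-j+1}=(S^*)^j(\vec s_{-1}\times\vec s_1).
\]
Here the base case is $\vec s_{-1}\times\vec s_1=(1,0,0)\times(0,0,1)=(0,-1,0)$, and a short matching in the conjugate basis gives $(0,-1,0)=-\vec s_{-1}^{\,*}+3\vec s_0^{\,*}$, since $\vec s_{-1}^{\,*}=(3,1,0)$ and $\vec s_0^{\,*}=(1,0,0)$. Applying $(S^*)^j$ termwise then yields $-\vec s_{j-1}^{\,*}+3\vec s_j^{\,*}$, as required.

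I do not expect a serious obstacle here; the only thing to watch is the sign and transpose bookkeeping in the cross-product transformation law, together with the identification $S^*=S^T$, which is what makes the whole argument collapse to two small base-case verifications rather than an inductive grind.
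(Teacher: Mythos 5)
Your proof is correct, but it takes a genuinely different route from the paper. The paper posits an expansion $\vec{s}_{-j}\times\vec{s}_{-j+1}=\alpha\vec{s}^*_{j-1}+\beta\vec{s}^*_j+\gamma\vec{s}^*_{j+1}$ with unknown coefficients, takes scalar products with $\vec{s}_{-j-1},\vec{s}_{-j},\vec{s}_{-j+1}$ (two of which vanish for free, the third being the triple product, equal to $1$), and solves the resulting triangular system using the scalar-product values $\langle\vec{s}^*_n,\vec{s}_k\rangle=\langle\vec{s}^*_0,\vec{s}_{k+n}\rangle$ established in the preceding lemma. You instead invoke the pushforward law $(M\vec{u})\times(M\vec{v})=(\det M)\,M^{-T}(\vec{u}\times\vec{v})$ with $M=S^{-j}$, together with the observations $\det S=1$ and $S^*=S^T$ (both of which check out against the displayed matrices), reducing each identity to a $j=0$ computation that is then propagated by $(S^*)^j$; the base cases $(0,1,0)\times(0,0,1)=(1,0,0)=\vec{s}^*_0$ and $(1,0,0)\times(0,0,1)=(0,-1,0)=-\vec{s}^*_{-1}+3\vec{s}^*_0$ are verified correctly, and the definition $\vec{s}^*_{j+1}=S^*\vec{s}^*_j$ for positive and negative indices is exactly what is needed to conclude. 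Your argument is self-contained (it does not lean on the scalar-product lemma) and it explains structurally why the conjugate series, generated by the transpose, must appear in these cross-product formulas; the paper's argument is more pedestrian but recycles its earlier lemma and avoids the transformation law for cross products under linear maps. Both are complete and elementary.
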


\begin{proof} Setting unknown coefficients $\alpha,\beta,\gamma$
$$\vec{s}_{-j}\times\vec{s}_{-j+1}=\alpha\vec{s}^*_{j-1}+\beta\vec{s}^*_j+\gamma\vec{s}^*_{j+1}$$
and taking scalar products in turn with $\vec{s}_{-j-1},\vec{s}_{-j},\vec{s}_{-j+1}$
we get
$$1=\alpha\cdot 3+\beta\cdot 1+\gamma\cdot 0$$
$$0=\alpha\cdot 1+\beta\cdot 0+\gamma\cdot 0$$
$$0=\alpha\cdot 0+\beta\cdot 0+\gamma\cdot 1$$
and from here $\alpha=\gamma=0$ and $\beta=1$. Likewise we prove the second formula.
\end{proof}

\section{The shortest coefficient vector and convergents}

The vector $(0,p,-q)=\eta(p-q\sigma)$ can as any vector be expanded in any
basis ${\mathcal B}_j$ and with integer coefficients.

In \cite{PLS}, using $\rho$ basis, we expressed $(0,p,-q)$ with a wider choice of vectors, but
coefficients being limited to 0,1,2 or 3: $(0,p,-q)=\sum_{j=k}^na_j\vec{s}_j$.
Whereas here, since each time only 3 vectors form the basis, we allow all integer coefficients.

Should ${p\over q}$ be a convergent
to $\sigma$, we can control the size of the these coefficients, provided $j$ has
been chosen appropriately.

\begin{defn} Let ${p\over q}$ be a convergent to $\sigma$ and $\vec{a}=(a_1,a_2,a_3)$ the
coordinates of
$\eta(p-q\sigma)=(0,p,-q)=a_1\vec{s}_{j-1}+a_2\vec{s}_j+a_3\vec{s}_{j+1}$
in the basis ${\mathcal B}_j$. This basis is called
\textit{appropriate for the convergent}, when the vector
$\vec{a'}=\rho^{-{j\over 4}}\vec{a}$ is the shortest for some
positive integer $j$.
\end{defn}

\begin{exa}
From the table of convergents \cite{PLS} we take $p=1251,q=4813$ that is just
preceding the relatively big partial quotient $b_{11}=14$, so that $|\delta|<{1\over 14}$
in the estimate $p-q\sigma={\delta\over q}$.

Which $j$ take, to make the vector $\vec{a'}$ shortest? Here are
some results in the Table 1.

\begin{table}\label{tab1}
\begin{center}
\begin{tabular}{|r|r|c|}
\hline
$j$&$\vec{a}$&$|\vec{a'}|$ \\
\hline\hline
8&(20,-69,-33)&5.34 \\ \hline
9&(-9,27,20)&1.68 \\ \hline
10&(0,-7,-9)&0.39 \\ \hline
11&(-7,-9,0)&0.28 \\ \hline
12&(-30,-21,-7)&0.65 \\ \hline
13&(-111,-97,-30)&1.89 \\ \hline
\end{tabular}
\end{center}
\vskip5mm
\caption{Appropriate vector}
\end{table}

And we see that the appropriate $j$ and ${\mathcal B}_j$ to give
the shortest $\vec{a}'$ is $j=11$.
\end{exa}

\begin{thm} Let ${p\over q}$ be a convergent to $\sigma$,
${\mathcal B}_j$ its appropriate basis. Then for its reduced
coefficient vector we have
$$|\vec{a}'|<2.01.$$
\end{thm}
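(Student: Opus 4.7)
The plan is to exploit that $\mathcal B_j = S^j\mathcal B_0$ with $\det S = 1$, which, as in the first lemma of this section, gives $\vec a = S^{-j}\vec v$ with $\vec v = \eta(p-q\sigma) = (0,p,-q)$. Decomposing $\vec v$ in the eigenbasis produces
$$\vec v = \alpha\vec h + \beta(\vec g+i\vec k)+\bar\beta(\vec g-i\vec k),\qquad \alpha = p-q\sigma,\ \beta = p-q\sigma',$$
and since $S^{-j}$ scales the eigen-components by $\rho^j,\rho'^j,\rho''^j$ with $|\rho'|=|\rho''|=\rho^{-1/2}$, one obtains
$$\vec a = \alpha\rho^j\vec h + 2\mathrm{Re}(\beta\rho'^j)\vec g - 2\mathrm{Im}(\beta\rho'^j)\vec k.$$
The $\vec h$-component of $\vec a$ has size $|\alpha|\rho^j$, and the plane-$P$ part has magnitude bounded by the longer axis of an ellipse of radius $|\beta|\rho^{-j/2}$.

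Taking the Euclidean norm using the Gram matrix of $(\vec h,\vec g,\vec k)$ (whose entries are computable exactly from the coordinates listed earlier in the paper), dividing by $\rho^{j/2}$, and introducing $\xi = |\alpha|\rho^{3j/4}$ and $\eta=|\beta|\rho^{-3j/4}$ (so that $\xi\eta = P := |\alpha||\beta|$ is independent of $j$), I obtain an estimate of the form
$$|\vec a'|^2 \le |\vec h|^2\xi^2 + 4\lambda_+\eta^2 + 4sP,$$
where $\lambda_+$ is the larger eigenvalue of the plane Gram matrix of $\vec g,\vec k$ and $s = \sqrt{\langle\vec h,\vec g\rangle^2+\langle\vec h,\vec k\rangle^2}$. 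The AM--GM continuous minimum of the $\xi^2+\eta^2$ part is $2\sqrt{4\lambda_+|\vec h|^2}\,P$, and the nearest integer $j$ loses at most a factor $\cosh(\tfrac{3}{2}\log\rho)$ because $\xi/\eta$ steps by $\rho^{3/2}$ per unit $j$. To bound $P$ I use the elementary symmetric functions of the roots of $X^3+3X^2+3X-1$ to get $|p-q\sigma'|^2 = 3\trs\,q^2 + 3q\delta\trd + \delta^2$ (with $\delta = p-q\sigma$), which combined with $|\delta|<1/q$ yields $P^2 \le 3\trs + 3\trd/q^2 + 1/q^4$, so that $P$ stays below $\sqrt{3\trs}\approx 2.18$ in the limit.

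The hard part is purely numerical: chaining the above constants gives $|\vec a'|^2 \le CP$ with $C$ dangerously close to $4.04/P_{\max}$. The continuous optimum alone gives $|\vec a'|\lesssim 1.5$, but the $\cosh\approx 1.56$ loss from passing to integer $j$ pushes the worst case up close to 2.01. To fit under the stated bound one must further use that $\arg(\beta\rho'^j)$ rotates by $\theta\approx 146.2^\circ$ per unit $j$ and pick an integer $j$ whose phase drives the plane contribution from $4\lambda_+\eta^2$ toward the smaller $4\lambda_-\eta^2$; a separate direct check of the few smallest-$q$ convergents may also be required to handle the regime where $3\trd/q^2+1/q^4$ in the bound on $P^2$ is non-negligible.
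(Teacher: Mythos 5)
Your setup is essentially the paper's: decompose $(0,p,-q)$ in the eigenbasis, apply $S^{-j}$, rescale by $\rho^{-j/4}$, introduce the $j$-invariant product $\xi\eta=P$ (the paper's $K_j=q\rho^{-3j/4}$ plays exactly this role), bound the two balancing terms by AM--GM plus the loss from the geometric grid, bound the $\langle\vec{h},\cdot\rangle$ cross term separately, and check the small-$q$ convergents by hand. (Minor slip: the grid loss is $\cosh(\tfrac34\ln\rho)\doteq 1.556$, not $\cosh(\tfrac32\ln\rho)$ --- the worst grid point sits half a step of $\tfrac32\ln\rho$ from the continuous optimum; your numerical value $1.56$ is the right one.)

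The genuine gap is at the end, where you leave the decisive numerics open and assert that an extra phase argument is needed. It is not, and it could not be supplied. The paper's computation shows the crude bound --- taking the worst semi-axis $\lambda_+=\max_\alpha|\vec{g}\cos\alpha+\vec{k}\sin\alpha|^2\doteq0.680$ for the plane part --- already closes the estimate: with $a=|\vec{h}|^2\doteq0.0787$ and $b=12\trs\lambda_+\doteq12.9556$ one gets $T_1+T_2<\sqrt{ab}\,(\rho^{3/4}+\rho^{-3/4})\doteq3.1421$, the cross term is below $0.8949$, and $\sqrt{3.1421+0.8949}<2.0093<2.01$; the residual term (your finite-$q$ corrections to $P$, the paper's $\vec{a}'''$) is below $0.00078$ once $j\ge6$, and the five convergents with $q<73$ are verified directly. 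So the margin, though tiny, exists without any phase considerations. More importantly, the fix you propose would fail: the index $j$ is already pinned down (to the unique integer putting $K_j$ in the balancing interval $\lbrack x'\rho^{-3/4},x'\rbrack$, i.e.\ making $\vec{a}'$ shortest), so there is no remaining freedom to also steer $j\theta+\tfrac{\pi}{6}$ toward the short axis of the ellipse; for a given convergent that phase is whatever it happens to be. Had the $\lambda_+$ bound come out above $2.01$, your route would have been stuck rather than rescuable. To complete your proof, simply carry out the numerical chain you describe with $\lambda_+$ and verify $\sqrt{4\sqrt{\lambda_+}\,|\vec{h}|\,P\cosh(\tfrac34\ln\rho)+4sP}<2.01$ for $P\le\sqrt{3\trs}$, plus the finite-$q$ corrections and the small-$q$ table.
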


\begin{proof} We can write $p=q\sigma+{\delta\over q}$ with $|\delta|<1$
$$(0,p,-q)=p\vec{s}_0-q\vec{s}_1=p(\vec{h}+2\vec{g})-q(\sigma\vec{h}+2\sqrt{\rho}(\vec{g}\cos\theta-\vec{k}\sin\theta))$$
and when we rearrange the terms
$$(0,p,-q)={\delta\over q}\vec{h}+q\trd\sqrt{3}(\sqrt{3}\vec{g}+\vec{k})+{2\delta\over q}\vec{g},$$
we see what happens to either term under action of $S^{-j}$ with
growing $j$. The first term grows exponentially with $\rho^j$ in
the direction of the eigenvector $\vec{h}$, the second decreases
with $\rho^{-{j\over 2}}$ and rotates in the eigenplane, and the
last term also decreases

\begin{eqnarray}\label{a1}
\vec{a}=S^{-j}(0,p,-q)&=&\rho^j{\delta\over
q}\vec{h}+\rho^{-{j\over 2}}q
2\trd\sqrt{3}(\vec{g}\cos(j\theta+{\pi\over
6})+\vec{k}\sin(j\theta+{\pi\over 6}))\nonumber\\ &+&
\rho^{-{j\over 2}}{2\delta\over
q}(\vec{g}\cos(j\theta)+\vec{k}\sin(j\theta)).
\end{eqnarray}

But if we put $K_j=q\rho^{-{3j\over 4}}$
we get
\begin{eqnarray*}
\vec{a}=S^{-j}(0,p,-q)&=&\rho^{j\over 4}{\delta\over
K_j}\vec{h}+\rho^{j\over 4}K_j
2\trd\sqrt{3}(\vec{g}\cos(j\theta+{\pi\over
6})+\vec{k}\sin(j\theta+{\pi\over 6}))\\
 &+& \rho^{-{5j\over
2}}{2\delta\over K_j}(\vec{g}\cos(j\theta)+\vec{k}\sin(j\theta)).
\end{eqnarray*}
First we represent the vector ${\bf a}'=\rho^{-{j\over 4}}{\bf a}={\bf a}''+{\bf a}'''$ with
\begin{eqnarray*}
{\bf a}''&=&{\delta\over K_j}{\bf h}+2K_j\trd\sqrt{3}({\bf
g}\cos\alpha_j+{\bf k}\sin\alpha_j),\\
{\bf a}'''&=&\rho^{-{5j\over 4}}{2\delta\over K_j}({\bf g}\cos
j\theta+{\bf k}\sin j\theta),
\end{eqnarray*}
where $\alpha_j=j\theta+{\pi\over 6}$. As for ${\bf a}''$ we write the square of
its norm as a sum of three terms
\begin{eqnarray*}
|{\bf a}''|^2&=&{\delta^2|{\bf h}|^2\over K_j^2}+12\trs K_j^2
|{\bf g}\cos\alpha_j+{\bf k}\sin\alpha_j|^2+
4\delta\trd\sqrt{3}\langle {\bf h},{\bf g}\cos\alpha_j+{\bf k}\sin\alpha_j\rangle\nonumber \\
&=&T_1+T_2+T_3.
\end{eqnarray*}

The last term is estimated independently of $K_j$
\begin{equation}\label{T3}
|T_3|<4\cdot 1\cdot \trd\sqrt{3}\max_{\alpha\in\RR}\langle {\bf
h},{\bf g}\cos\alpha+{\bf k}\sin\alpha\rangle<0.894896.
\end{equation}
Denote
$$F(K_j,\alpha_j)=T_1+T_2={\delta^2|{\bf h}|^2\over K_j^2}+12\trs K_j^2 |{\bf g}\cos\alpha_j+{\bf k}\sin\alpha_j|^2$$
and to eliminate the dependence on $\alpha_j$ and $\delta$ we define another function
$$G(K_j)={|{\bf h}|^2\over K_j^2}+12\trs K_j^2 \max_{\alpha\in\RR}|{\bf g}\cos\alpha+{\bf k}\sin\alpha|^2$$
or inserting the numerical values
$$G(K_j)<{0.07873129\over K_j^2}+12.95559953 K_j^2=H(K_j),$$
observing that $F(K_j,\alpha_j)<H(K_j)$. The values of both
functions depend only on the choice of $j$, the variable $K_j$
assumes discrete values from a geometric series as
$K_j=q\rho^{-{3j\over 4}}$. The function $H(x)={a\over x^2}+b x^2$
as a function of continous variable $x>0$ features just one
minimum at $x_0=\root 4 \of {a\over b}$  with value
$H(x_0)=2\sqrt{ab}$, but the discrete variable $K_j$ shall almost
certainly miss this minimum point. We shall further denote by $x'$
the unique solution to the equation $H(x')=H(x'\rho^{-{3\over
4}})$. Indeed the equation $H(x')=H(x'\rho^{-{3\over 4}})$ reads
$${a\over x^2}+bx^2={a\over x^2\rho^{-{3\over 2}}}+bx^2\rho^{-{3\over 2}}$$
and we can easily solve it
$$x'=\root 4 \of {a\over b}\rho^{3\over 8}=x_0\rho^{3\over 8},$$
with the value
$$H(x')=\sqrt{ab}(\rho^{3\over 4}+\rho^{-{3\over 4}}).$$

However within the interval
$\lbrack x'\rho^{-{3\over 4}},x' \rbrack$ there is exactly one $K_j$ and this
defines also the choice of $j$.

To determine $j$ we have
$$ x'\rho^{-{3\over 4}}<K_j<x',$$
$$ x'\rho^{-{3\over 4}}<q\rho^{-{3j\over 4}}<x'.$$
Taking logarithms
$$\ln x'-{3\over 4}\ln\rho<\ln q-{3j\over 4}\ln\rho<\ln x'$$
and dividing by $(-{3\over 4}\ln\rho)$ we get
\begin{equation}\label{ocena}
1-{4\ln
x'\over 3\ln\rho}>j-{4\ln q\over 3\ln\rho}>-{4\ln x'\over
3\ln\rho},
\end{equation}
$$1+{4(\ln q - \ln x')\over 3\ln\rho}>j>{4(\ln q - \ln x')\over 3\ln\rho},$$
$$j=\left\lbrack 1+{4(\ln q - \ln x')\over 3\ln\rho}\right\rbrack \in \NN .$$

Thus we have
$$F(K_j,\alpha_j)<H(K_j)<H(x').$$

Inserting numerical values gives $x_0=0.279205$, $x'=0.462761$,
$x'\rho^{-{3\over 4}}=0.168457$,  $H(x')=3.142064$ and so
$F(K,\alpha)<3.142064$, which together with the estimate
(\ref{T3}) yields
$$|{\bf a}''|<\sqrt{3.142064+0.894896}<2.009219.$$

Let take $\varepsilon = 2.01 - 2.009219=0.000781$. We approximate
${\bf a}'''$
$$
|{\bf a}'''|<\rho^{-{5j\over 4}}{2\cdot1\over
K_j}\max_{\theta\in\RR}|{\bf g}\cos \theta+{\bf k}\sin \theta| <
\rho^{-{5j\over 4}}{1.649395\over x'\rho^{-{3\over
4}}}<\varepsilon
$$
from where we get $j\ge 6$ and the desired inequality follows for
these $j$.

If $j\le 5$ we get from the inequality (\ref{ocena}) condition on
$q$
$$
5-{4\ln q\over 3\ln\rho}\ge j-{4\ln q\over 3\ln\rho}>-{4\ln
x'\over 3\ln\rho}
$$

and $q$ has to be smaller than 73. There is only five convergents
with such $q$ and from the  Table 2 we see, that computed $|{\bf
a}'|$ satisfies our inequality.

\begin{table}[!htbp]\label{convxx}
\begin{center}
\begin{tabular}{|c|c|}
\hline
${p/q}$&$|\vec{a'}|$ \\
\hline\hline ${1/ 3}$&1.151 \\ \hline ${1/ 4}$&0.581 \\
\hline ${6/ 23}$&0.928 \\ \hline ${7/ 27}$&0.870 \\ \hline ${13/ 50}$&0.415 \\
\hline
\end{tabular}
\end{center}
\vskip5mm
\caption{First five convergents}
\end{table}

Thus we can confirm
$$|{\bf a}'|<2.01$$
\end{proof}

\begin{rem} Over the first 10000 convergents we numerically find that $|\vec{a}'|<1.753.$
The adjacent Figure \ref{fig:gr1} shows the statistics in dots,
$|\vec{a}'|$, for these convergents.
\end{rem}

\begin{figure}[!htbp]
\centering
\includegraphics[width=120mm]{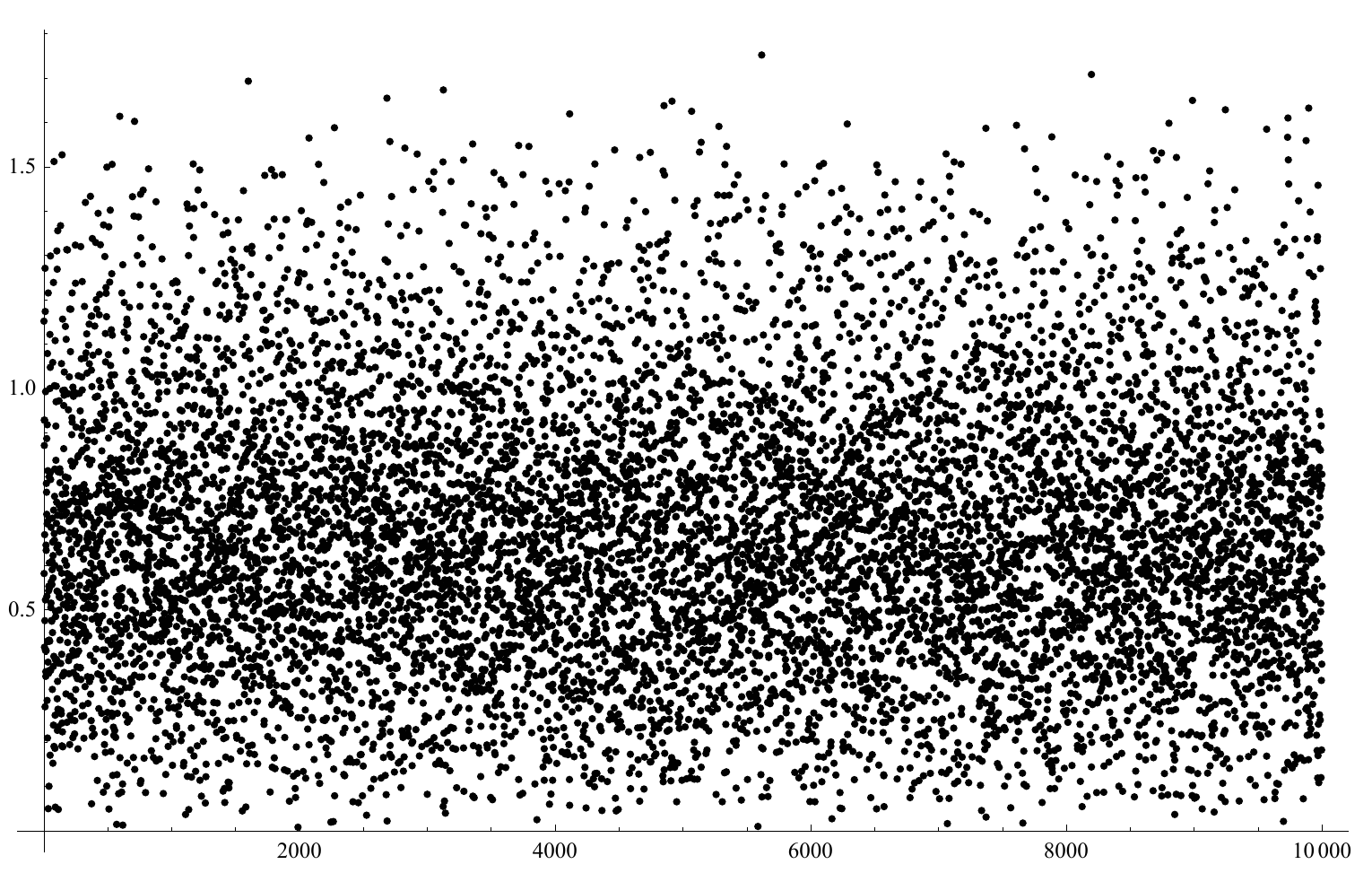}
\caption{Length of vectors $\vec{a}'$}\label{fig:gr1}
\end{figure}

Also the other way round, if coefficients are small enough, we are
dealing with a convergent. The following theorem is however a rather
coarse one.

\begin{thm} Let ${\bf a}=(a_1,a_2,a_3)\in\ZZ^3$ be the coefficient vector for the basis
${\mathcal B}_j$, $j\in \NN$ such that $\langle {\bf a},{\bf
s}^*_j\rangle=0$ and $|{\bf a}|<{1\over 3}\rho^{j\over 4}$. Then
the resulting vector $a_1{\bf s}_{j-1}+a_2{\bf s}_j+a_3{\bf
s}_{j+1}=(0,p,-q)$ yields a continous fraction convergent ${p\over
q}$.
\end{thm}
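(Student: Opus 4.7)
The strategy is Legendre's criterion: since $\sigma$ is irrational, it is enough to show that $|p/q-\sigma|<1/(2q^{2})$, equivalently, writing $p-q\sigma=\delta/q$, that $|\delta|<\tfrac{1}{2}$. In view of the identity $\vec{s}^{*}_{j}=(x_{j-1},x_{j},x_{j+1})$ from Remark~4.2, the hypothesis $\langle\vec{a},\vec{s}^{*}_{j}\rangle=0$ expresses precisely the vanishing of the first coordinate of $\vec{v}=a_{1}\vec{s}_{j-1}+a_{2}\vec{s}_{j}+a_{3}\vec{s}_{j+1}$, so $\vec{v}=(0,p,-q)=\eta(p-q\sigma)$. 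The degenerate possibility $q=0$ would force $\vec{a}=p\vec{s}_{-j}$, which together with the asymptotics $|\vec{s}_{-j}|\sim|\vec{h}|\rho^{j}$ is incompatible with $|\vec{a}|<\rho^{j/4}/3$ unless $p=0$ as well, so we may assume $q\neq 0$.

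The first real step is to reuse the eigenbasis decomposition carried out in the proof of the preceding theorem. Writing $\vec{a}'=\rho^{-j/4}\vec{a}$ and $K_{j}=q\rho^{-3j/4}$, the same calculation gives
$$\vec{a}'=\frac{\delta}{K_{j}}\vec{h}+\vec{a}'_{P},\qquad \vec{a}'_{P}\in P=\mathrm{span}(\vec{g},\vec{k}).$$
Since $\vec{h}^{*}=\vec{g}\times\vec{k}$ is orthogonal to $P$ and $\langle\vec{h},\vec{h}^{*}\rangle=-M$, pairing with $\vec{h}^{*}$ annihilates the plane part and yields
$$|\delta|=\frac{K_{j}}{M}\,|\langle\vec{a}',\vec{h}^{*}\rangle|\le\frac{|\vec{h}^{*}|}{M}\,K_{j}\,|\vec{a}'|<\frac{|\vec{h}^{*}|}{3M}\,K_{j}.$$

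It remains to bound $K_{j}$ from above, which I would do by comparing two estimates of $|\vec{a}'_{P}|$. On the one hand, $\vec{a}'_{P}=\vec{a}'-(\delta/K_{j})\vec{h}$ together with the inequality just obtained gives $|\vec{a}'_{P}|<\tfrac{1}{3}\bigl(1+|\vec{h}|\,|\vec{h}^{*}|/M\bigr)$. On the other hand, formula~(\ref{a1}) yields the lower estimate $|\vec{a}'_{P}|\ge 2\trd\sqrt{3}\,K_{j}\min_{\alpha}|\vec{g}\cos\alpha+\vec{k}\sin\alpha|$ up to a correction of order $\rho^{-5j/4}|\delta|/K_{j}$ (exactly the remainder $\vec{a}'''$ appearing in the preceding proof). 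Combining the two bounds pins down $K_{j}\le c$ for an explicit constant $c$, and substituting back into the display above delivers $|\delta|<\tfrac{1}{2}$, whence by Legendre's criterion $p/q$ is a convergent to $\sigma$.

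The principal obstacle is numerical sharpness: the factor $\tfrac{1}{3}$ in the hypothesis is plainly calibrated so that the chain of inequalities just closes, and the constants $|\vec{h}|$, $|\vec{h}^{*}|$, $M$, $\min_{\alpha}|\vec{g}\cos\alpha+\vec{k}\sin\alpha|$ will need to be tracked quite carefully. A secondary technical point is that the $O(\rho^{-5j/4}|\delta|/K_{j})$ correction in the plane estimate is only harmless for $j$ past some threshold $j_{0}$; for the finitely many smaller values of $j$ (where the corresponding $q$ is necessarily very small) one likely has to refine the estimate or enumerate the few relevant $(p,q)$ directly, mirroring the final Table~2 argument in the proof of the preceding theorem.
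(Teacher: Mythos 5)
Your skeleton matches the paper's: reduce to Legendre's criterion $|\delta|<\tfrac12$, decompose $\vec{a}'$ in the eigenbasis, pair with $\vec{h}^*$ to obtain $|\delta|<K_j|\vec{h}^*|/(3M)$ (this is exactly the paper's inequality (\ref{deltaq}) after multiplying through by $q$), and then bound $K_j$ through the plane component. The gap is in that last step: the two bounds you propose for $|\vec{a}'_P|$ do not close numerically. Your upper bound $\tfrac13\bigl(1+|\vec{h}|\,|\vec{h}^*|/M\bigr)\doteq0.706$ (triangle inequality applied to an oblique decomposition) against your lower bound $2\trd\sqrt3\,K_j\min_\alpha|\vec{g}\cos\alpha+\vec{k}\sin\alpha|\doteq1.014\,K_j$ (note $\min_\alpha|\cdots|\doteq0.2324$, because $\vec{g},\vec{k}$ are far from orthonormal) gives only $K_j\lesssim0.696$, hence $|\delta|\lesssim0.696\cdot|\vec{h}^*|/(3M)\doteq0.92$ --- below $1$ but well above the Legendre threshold $\tfrac12$, and $|p/q-\sigma|<1/q^2$ alone does not force $p/q$ to be a convergent. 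What you actually need is $K_j<3M/(2|\vec{h}^*|)\doteq0.377$, roughly half of what your chain delivers.

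The paper avoids this loss by pairing with the dual plane vectors $\vec{g}^*,\vec{k}^*$ (equations (\ref{gz}), (\ref{kz})), then squaring and adding so that the unknown phase $\alpha_j$ drops out exactly via $\cos^2+\sin^2=1$; Cauchy--Schwarz componentwise then yields $2\trd\sqrt3\,MK_j\le\tfrac13\sqrt{|\vec{g}^*|^2+|\vec{k}^*|^2}+\cdots$, i.e.\ $K_j\lesssim0.345$ and $|\delta|<0.48$. Your route is repairable, but only by replacing the triangle inequality with the operator norm of the oblique projection onto $P$ along $\vec{h}$, namely $|\vec{a}'_P|\le\frac{|\vec{h}|\,|\vec{h}^*|}{M}|\vec{a}'|<\tfrac13\cdot1.118\doteq0.373$, which gives $K_j\lesssim0.367$ and $|\delta|\lesssim0.488$ --- it closes, but with essentially no margin, and you must still absorb the $\rho^{-5j/4}$ correction and treat the smallest $j$ separately (for $j=1$ the hypothesis $|\vec{a}|<\tfrac13\rho^{1/4}<1$ already excludes any nonzero integer vector, as the paper observes). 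As written, however, your inequalities prove only $|\delta|<0.93$ and the conclusion does not follow. The identification of the orthogonality condition with the vanishing first coordinate, and the dismissal of $q=0$, are correct and are points the paper leaves implicit.
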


\begin{proof}

We multiply the equation (\ref{a1}) with the conjugate vectors
${\bf h}^*, {\bf g}^*, {\bf k}^*$ to obtain

\begin{eqnarray}\label{hz}\langle{\bf h}^*, {\bf a}\rangle&=&\rho^j{\delta\over q}\langle{\bf h}^*, {\bf
h}\rangle\\
\label{gz}\langle{\bf g}^*, {\bf a}\rangle&=&\rho^{-{j\over 2}}q
2\trd\sqrt{3}\langle{\bf g}^*, {\bf g}\rangle
\cos(j\theta+{\pi\over 6})+\rho^{-{j\over 2}}{2\delta\over
q}\langle{\bf g}^*, {\bf g}\rangle\cos(j\theta)\\
\label{kz}\langle{\bf k}^*, {\bf a}\rangle&=&\rho^{-{j\over 2}}q
2\trd\sqrt{3}\langle{\bf k}^*, {\bf k}\rangle
\sin(j\theta+{\pi\over 6})+\rho^{-{j\over 2}}{2\delta\over
q}\langle{\bf k}^*, {\bf k}\rangle\sin(j\theta)
\end{eqnarray}
Inserting the conditions of the theorem, we can estimate the
quantitie ${|\delta|\over q}$ using (\ref{hz}):
\begin{equation}\label{deltaq}{|\delta|\over q}<\rho^{-{3j\over 4}}{|{\bf h}^*|\over
3M}.
\end{equation}
Using (\ref{gz}) and (\ref{kz}) we get
$$q\cdot 2\trd\sqrt{3}\cos(j\theta+{\pi\over 6})M=\rho^{j\over 2}
\langle{\bf g}^*,{\bf a}\rangle- 2{\delta\over q}M\cos(j\theta),$$
$$q\cdot 2\trd\sqrt{3}\sin(j\theta+{\pi\over 6})M=-\rho^{j\over 2}
\langle{\bf k}^*,{\bf a}\rangle- 2{\delta\over q}M\sin(j\theta).$$
We square and add up the last two equations to eliminate the sines
and cosines

$$q^2(2\trd\sqrt{3}M)^2=\rho^j(\langle{\bf g}^*,{\bf a}\rangle^2+\langle{\bf k}^*,{\bf a}\rangle^2) +
\rho^{j\over 2}{4\delta\over q}M(\langle{\bf k}^*,{\bf
a}\rangle\cos(j\theta)- \langle{\bf g}^*,{\bf
a}\rangle\sin(j\theta))+{4\delta^2\over q^2}M^2$$

and estimate

$$q^2(2\trd\sqrt{3}M)^2<\rho^{3j\over 2}{|{\bf g}^*|^2+|{\bf k}^*|^2\over 9}+
{4|{\bf h}^*|\over 9}(|{\bf g}^*|+|{\bf k}^*|)+\rho^{-{3j\over 2}}
{4|{\bf h}^*|^2\over 9}.$$

Using (\ref{deltaq}) and last inequality we can estimate
$$
|\delta|<{|{\bf h}^*|\over 18 M^2\trd\sqrt{3}}\sqrt{|{\bf
g}^*|^2+|{\bf k}^*|^2+4\rho^{-{3j\over 2}}|{\bf h}^*|(|{\bf
g}^*|+|{\bf k}^*|)+4\rho^{-3j}|{\bf h}^*|^2}
$$

which is   smaller than $0.48$ for $j\ge 2$. For $j=1$ condition
$|{\bf a}|<{1\over 3}\rho^{1\over 4}<0.47$ implies nonexistance of
such integer vector. So we show $|\delta|<{1\over2}$, that is
enough for our conclusion.
\end{proof}

From the proof we can also infer the implications of an even smaller
$|{\bf a}|$ on the $|\delta|$ and consecutively the next partial quotient $B$.

\begin{lem}
If in the above theorem $|{\bf a}|<{\Delta\over 3}\rho^{j\over 4},
\Delta<1$, we can estimate the next partial quotient $B$ to the
convergent ${p\over q}$ by
$$B>{2\over\Delta^2}-2.$$
\end{lem}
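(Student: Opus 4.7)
The plan is to rerun the proof of the preceding theorem a second time, keeping track of how the sharper hypothesis $|\vec{a}|<\frac{\Delta}{3}\rho^{j/4}$ propagates through every intermediate estimate. Each of the three scalar products $\langle\vec{h}^*,\vec{a}\rangle$, $\langle\vec{g}^*,\vec{a}\rangle$, $\langle\vec{k}^*,\vec{a}\rangle$ now inherits an extra factor of $\Delta$, so the analogue of (\ref{deltaq}) reads
$$\frac{|\delta|}{q}<\rho^{-3j/4}\frac{\Delta|\vec{h}^*|}{3M},$$
and after squaring and adding (\ref{gz}) with (\ref{kz}) the corresponding bound on $q^2$ picks up a further factor $\Delta^2$. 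Substituting the first inequality into $|\delta|^2=(|\delta|/q)^2\cdot q^2$ and combining exactly as before, the numerical constant $0.48$ obtained in the proof of the preceding theorem scales cleanly by $\Delta^2$, yielding
$$|\delta|<\Delta^2\cdot 0.48<\frac{\Delta^2}{2}\qquad(j\ge 2).$$
The case $j=1$ is vacuous: the bound $|\vec{a}|<\frac{\Delta}{3}\rho^{1/4}<\frac{1}{3}\rho^{1/4}<0.47$ forbids any nonzero integer vector.

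I then translate the upper bound on $|\delta|$ into a lower bound on the next partial quotient by invoking the standard continued fraction identity
$$q_n\sigma-p_n=\frac{(-1)^n}{\alpha_{n+1}q_n+q_{n-1}},$$
where $\alpha_{n+1}$ denotes the $(n+1)$-st complete quotient of $\sigma$. Rewriting $\delta$ with $p/q=p_n/q_n$,
$$\delta=q_n|q_n\sigma-p_n|=\frac{1}{\alpha_{n+1}+q_{n-1}/q_n}.$$
Since $\alpha_{n+2}>1$ gives $\alpha_{n+1}=B+1/\alpha_{n+2}<B+1$, and $q_{n-1}/q_n\le 1$, this forces $\delta>\frac{1}{B+2}$. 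Rearranging and feeding in $\delta<\Delta^2/2$,
$$B>\frac{1}{\delta}-2>\frac{2}{\Delta^2}-2,$$
which is the claim.

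The only genuine work is the $\Delta$ bookkeeping: $|\vec{a}|$ appears both in (\ref{hz}) (once, contributing a factor $\Delta$ to $|\delta|/q$) and in the squared equations (\ref{gz})--(\ref{kz}) (contributing another factor $\Delta$ to $q$), so that the combined scaling of $|\delta|$ is precisely $\Delta^2$ and not $\Delta$. Everything else is a transcription of the earlier argument, and the slack between the numerical value $0.48$ and the threshold $0.5$ leaves exactly enough room for the conclusion with no further numerical refinement.
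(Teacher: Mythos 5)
Your proposal is correct and follows essentially the same route as the paper: the paper likewise observes that replacing $1/3$ by $\Delta/3$ makes the bound on $|\delta|/q$ scale by $\Delta$ and the bound on $q$ scale by another $\Delta$, giving $|\delta|<\Delta^2\cdot 0.48<\Delta^2/2$, and then invokes the classical estimate $\frac{1}{(B+2)q^2}<|\frac{p}{q}-\sigma|$ (cited from Perron) to get $B>\frac{1}{|\delta|}-2$. The only difference is that you derive that classical inequality from the complete-quotient identity and spell out the $\Delta$ bookkeeping explicitly, both of which the paper leaves to the reader.
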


\begin{proof}
From the well known estimate \cite{P}
$$
{1\over(B+2)q^2}<\left|{p\over q}-\sigma\right|<{1\over B q^2}
$$
we find ${1\over B+2}<|\delta|$ or
$$
B>{1\over |\delta|}-2.
$$
But from the above proof, if instead of ${1\over3}$, we put
${\Delta\over3}$, we have
$|\delta|<\Delta^2\cdot0.48<\Delta^2\cdot{1\over2}$ and the
estimate from lemma follows.
\end{proof}

\begin{exa}In our numerical experiment we found for $j=750$,
$\Delta=0.03906$, next $B=4941=b_{619}$ (well known big partial
quotient \cite{PLS}) with lemma suggesting $B>1308$.
\end{exa}

\subsection{The problem of the shortest lattice vector}
 Our case the lattice
$\Lambda_j=\{\vec{a}\in\ZZ^3,\vec{a}\perp\vec{s}^*_j\}$. Gauss reduction process
mimics the euclidian algorithm. Let's have a basis $\vec{z}_1,\vec{z}_2$, such that
$|\vec{z}_1|<|\vec{z}_2|$. Choose $k$ so that
$$-{1\over 2}|\vec{z}_1|^2<\langle \vec{z}_2-k\vec{z}_1,\vec{z}_1\rangle\le{1\over 2}|\vec{z}_1|^2$$
so $k\in\ZZ$ is the nearest integer to
${\langle \vec{z}_2,\vec{z}_1\rangle\over|\vec{z}_1|^2}$.

Now, set the new $\vec{z}_2:=\vec{z}_2-k\vec{z}_1$ and compare: if
$|\vec{z}_1|<|\vec{z}_2|$ the process terminates, our shortest vector is
$\vec{z}_1$, else we interchange $\vec{z}_1\leftrightarrow\vec{z}_2$ and start again.
In some steps we get the shortest lattice vector \cite{St}.

\begin{rem}
In higher dimensions the so called LLL-algorithm \cite{St},
\cite{LLL} is similar to Gramm-Schmidt orthogonalization to
generalize the Gauss process.
\end{rem}

\begin{exa}
Here is how we carried out this process for $j=7$, i.e.
$\vec{z}_1=\vec{s}_{-6}$, $\vec{z}_2=\vec{s}_{-7}$ and we have the
shortest vector $\vec{a}=(-7,1,0)$. Combining
$-7\vec{s}_6+1\vec{s}_7=$ $(0,59,-227)$ we read off the quotient
${59\over 227}$ which does appear in the sequence of approximants
(Table 3).

\begin{table}[!htbp]\label{GS}
\begin{center}
\begin{tabular}{|r|r|r|r|r|r|}
\hline
$n$&$\vec{z}_1$&$\vec{z}_2$&$|\vec{z}_1|^2$&$\langle\vec{z}_1,\vec{z}_2\rangle$&$k$ \\
\hline\hline 1&(681,577,177)&(2620,2220,681)&828019&3185697&4 \\
\hline 2&(-104,-88,-27)&(681,577,177)&19289&-126379&-7 \\ \hline
3&(-47,-39,-12)&(-104,-88,-27)&3874&8644&2 \\ \hline
4&(-10,-10,-3)&(-47,-39,-12)&209&896&4 \\ \hline
5&(-7,1,0)&(-10,-10,-3)&50&60&1 \\ \hline
6&(-3,-11,-3)&(-7,1,0)&139&10&0 \\ \hline
\end{tabular}
\vskip5mm
\caption{Shortest lattice vector}
\end{center}
\end{table}
\end{exa}

We carried out the shortest vector algorithm for $j=2$ until
$j=1000$. The resulting $|\vec{a}'|$ were, as shown in the dotted
Figure \ref{fig:gr2}, all below 1. Only 21 of them did not result
in continued fraction approximants (marked with squares).
\begin{figure}[!htbp]
\centering
\includegraphics[width=100mm]{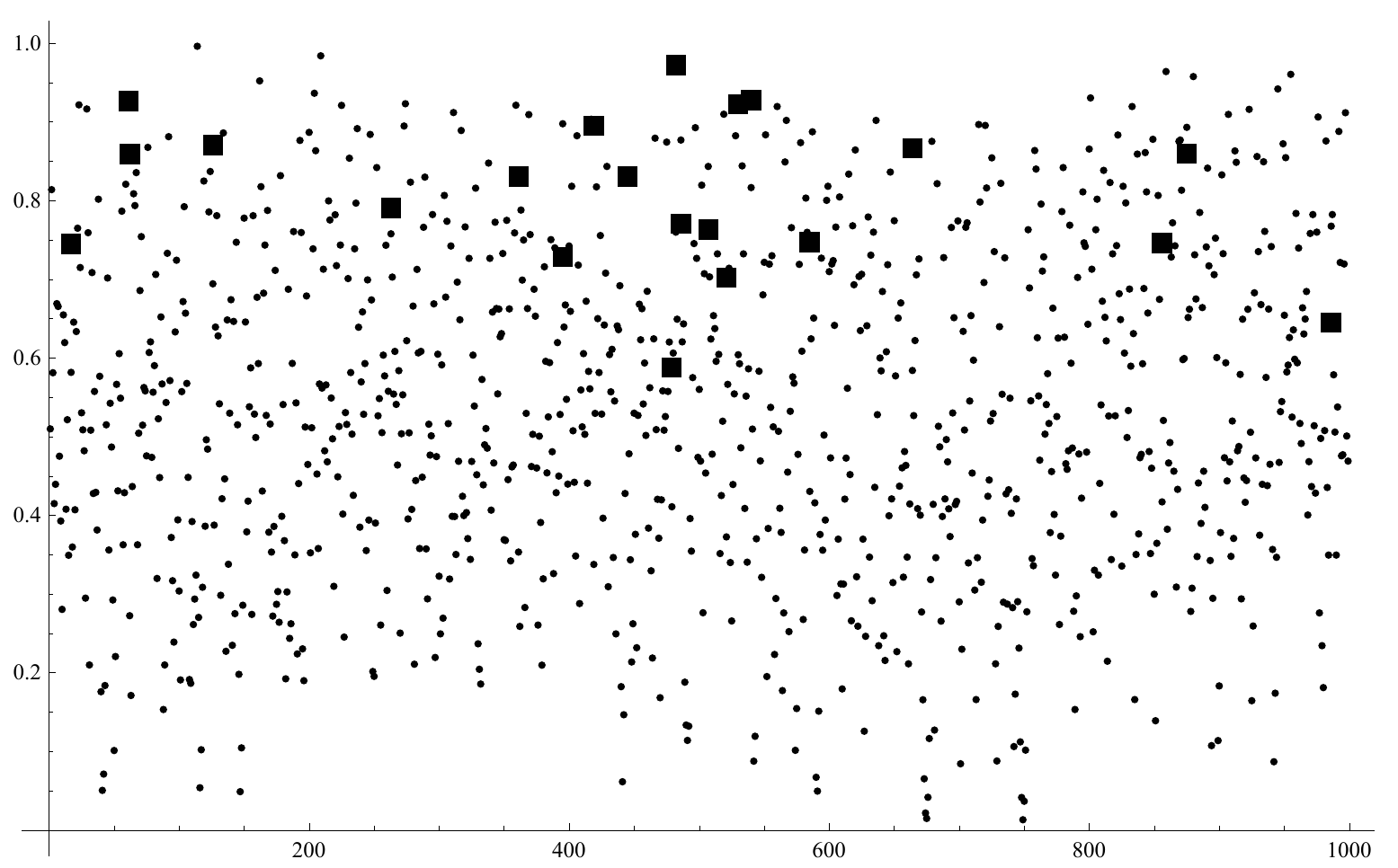}
\caption{Length of vectors $\vec{a}'$ resulting from the shortest vector algorithm}\label{fig:gr2}
\end{figure}

\section{$\chi^2$-test of distribution of partial quotients}

We applied $\chi^2$-test to compare observed frequencies of partial quotients of $\trd$ with theoretical
frequencies $P(b_n=k)=\log_2{(k+1)^2\over k(k+2)}$. Using \cite{math} we computed 75\;000 partial quotients and
divided them into $R$ groups consisting of numbers 1, 2, 3, \ldots, $R-1$ and of all numbers over $R-1$. Let $O_i$ be the observed frequency
of the $i^{th}$ group and $E_i$ its expected frequency. The value of the test statistic is
$$X^2=\sum_{i=1}^R \frac{(O_i-E_i)^2}{E_i}.$$  If the partial quotients the hypothesized distribution, $X^2$ has, approximately, a $\chi^2$ distribution with $R-1$ degrees of freedom. The resulting P-values for different degrees of freedom are shown in the Figure \ref{hi-test}. Since all the P-values are above $0.05$ we can not reject the hypothesis that the partial quotients of $\trd$ follow the distribution law of Kuzmin.

\begin{figure}[!htbp]
\centering
\includegraphics[width=100mm]{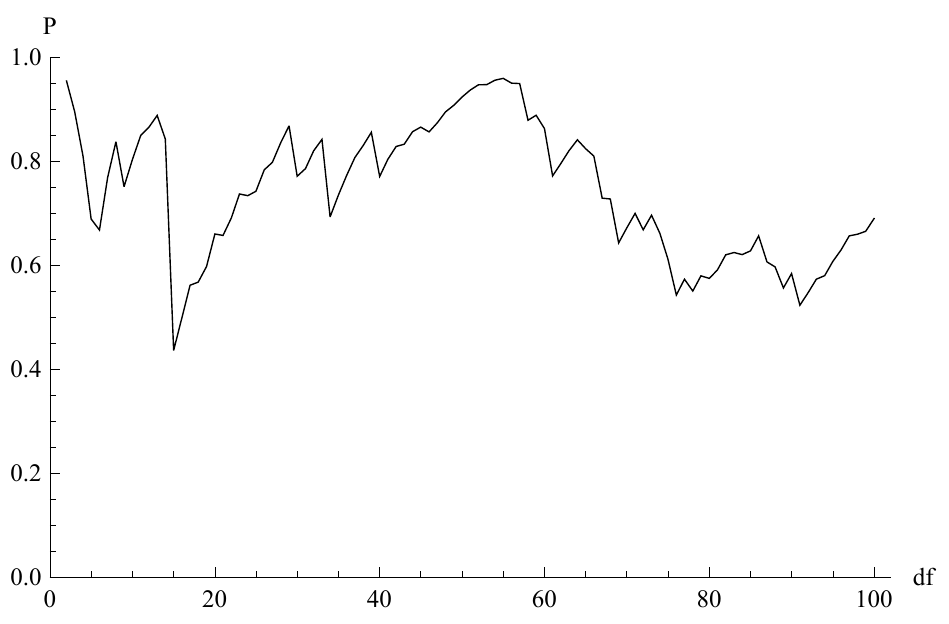}
\caption{P-value for different degrees of freedom}\label{hi-test}
\end{figure}



\begin{thebibliography}{99}
\bibitem{LB} L. Bernstein, {\it The Jacobi-Perron Algorithm, Its Theory and Application},
Lecture Notes in Mathematics 207 (1971) Springer-Verlag
\bibitem{Brj} A.D. Bruno, {\it Expansion of algebraic numbers in continued
fractions}, Zhurnal Vycch. Mat. i Mat. Fiziki 4 (1964) 211-221
\bibitem{CSG} Chen, Shi-gang; Wang, You-qin, {\it Cubic irrational numbers and
critical scaling law for quasiperiodic motion}, Physics Letters A,
vol. 153 (1991), 2-3, 113-116
\bibitem{CM} R.F. Churchhouse, S.T.E. Muir,{\it Continued
Fractions, Algebraic Numbers and Modular Invariants}, J.Inst.
Maths Applics (1969), 318-328
\bibitem{BD} B. Devaney, {\it An introduction to chaotic dynamical systems}, Westview Press 2003
\bibitem{LT} S. Lang, H. Trotter, {\it Continued fractions for some algebraic
numbers} J. f\"ur Mathematik 255 (1972) 112-134, Addendum 219-220
\bibitem{LLL} A.K. Lenstra, H.W. Lenstra, L. Lovasz, {\it Factoring Polynomials with
rational coefficients}, Math. Ann. 261 (1986) 515-534
\bibitem{SL} S. Lucas, {\it On using bounded continued fractions to represents reals},
 ANZIAM J. 45 (E) (2004), C921-C934
\bibitem{MH} Mao,Im, Helleman RH, {\it Breakup of Kolmogorov-Arnold-Moser tori
of cubic irrational winding number}, Phys. Rev. (1989) Jan 1; 39(1) 344-346
\bibitem{P} O. Perron, {\it Die Lehre von den Kettenbr\"uchen} Teubner,
Leipzig, Berlin 1913
\bibitem{PLS} P. Petek, M. Lakner, M. \v Skapin Rugelj, {\it In the search of
convergents to $\trd$}, Chaos, Solitons and Fractals, 41(2) (2009) 811-817
\bibitem{RDM} R.D. Richtnyer, M. Devaney, N. Metropolis, {\it Continued fractions expansions
of algebraic numbers}, Numerische Mathematik 4 (1962) 68-84
\bibitem{S} H.M. Stark, {\it An explanation of some exotic continued fraction
found by Brillhart}, Computers in Number Theory, (1971) 21-35
\bibitem{St} J. Steuding, {\it Diophantine Analysis}, Chapman Hall 2005
\bibitem{math} Wolfram Research, Inc., Mathematica, Version 8.0,  Champaign
(2010)
\end{thebibliography}
\end{document}